\documentclass[11pt,reqno]{amsart}

\numberwithin{equation}{section}

\usepackage{color,graphics,epsfig,a4wide,bm,mathrsfs}

\DeclareMathAlphabet{\mathpzc}{OT1}{pzc}{m}{it}

\newcommand{\calD}{\mathcal{D}}

\newcommand{\calF}{\mathcal{F}}

\newcommand{\calL}{\mathcal{L}}

\newcommand{\calO}{\mathcal{O}}

\newcommand{\calS}{\mathcal{S}}

\newcommand{\cals}{\mathpzc{s}}

\newcommand{\mC}{\mathbb{C}}

\newcommand{\mN}{\mathbb{N}}

\newcommand{\mR}{\mathbb{R}}

\newcommand{\mZ}{\mathbb{Z}}

\newcommand{\bxi}{{\bm{\xi}}}
\newcommand{\balpha}{{\bm{\alpha}}}
\newcommand{\bpartial}{{\bm{\partial}}}

\newcommand{\nm}{\,\rule[-.6ex]{.13em}{2.3ex}\,}

\newtheorem{theorem}{Theorem}[section]
\newtheorem{lemma}[theorem]{Lemma}
\newtheorem{corollary}[theorem]{Corollary}

\theoremstyle{definition}

\newtheorem{example}[theorem]{Example}

\theoremstyle{definition}

\theoremstyle{definition}

\begin{document}

\keywords{partial differential equations,
   distributions that are periodic in the spatial directions, Fourier transformation,
  spatially invariant systems}

\subjclass[2010]{Primary 35A24; Secondary 93C05, 35E20}

\title[Division problem for distributions]{Division problem for spatially periodic distributions }

\date{May 30, 2013}

 \author{Amol Sasane}
 \address{Department of Mathematics, London School of Economics,
      Houghton Street, London WC2A 2AE, United Kingdom.}
 \email{sasane@lse.ac.uk}

 \author{Peter Wagner}
 \address{Faculty of Technical Sciences, Universit\"at Innsbruck, Technikerstr.~13,
 A-6020 Innsbruck, Austria.}
 \email{wagner@mat1.uibk.ac.at}

\begin{abstract}
 We give a sufficient condition for the surjectivity of partial differential operators with constant
 coefficients  on the space of distributions on $\mR^{n+1}$ (here we think of there being $n$
 space directions and one time direction)
 that are periodic in the spatial directions and tempered in the time direction.
\end{abstract}

\maketitle

\section{Introduction}

An important milestone in the general theory of partial differential equations is
the solution to the division problem: Let $D$ be a nonzero partial differential operator with constant
coefficients and $T$ be a distribution; can one find a distribution $S$ such that
$DS=T$? That this is always possible was established by Ehrenpreis \cite{Ehrc}. See also
\cite{Ehr}, \cite{Ehrb}, \cite{Ehrd}, \cite{Ehre}, \cite{Hor}, \cite{Loj},
\cite{Wag} for the solution of  various avatars of the division problem
for different spaces of distributions.

In this article, we study the division problem in spaces of distributions on $\mR^{n+1}$
(where we think of there being $n$ space directions and one time direction)
 that are periodic in the spatial directions. The study of such solution spaces arises
 naturally in control theory when one considers the so-called ``spatially invariant systems''; see
 \cite{Sas}. In the ``behavioural  approach'' to control theory for such
 spatially invariant systems,  a  fundamental question is whether this class of distributions is
 an injective module over the ring of partial differential operators with constant coefficients; see
 for example \cite{Woo}. In light of this, one can first  ask what happens with the division
 problem. Thus besides being a purely mathematical question that fits in the
 classical theme mentioned in the previous paragraph, there is also a
 behavioural control theoretic motivation for studying the division problem for
 distributions that are periodic in the spatial directions.
 Upon taking Fourier transform with respect to the spatial variables,
 the problem amounts to the following.

\medskip

\noindent {\bf Problem:} For which $P(\tau,\bxi)\in \mC[\tau,\xi_1,\cdots,\xi_n]$ is
 $
\displaystyle P\Big(\frac{d}{dt},i\bxi\Big):X\rightarrow X
$
surjective, where
$$
X=\{(T_\bxi)=(T_\bxi)_{\bxi \in \mZ^n}\in \calD'(\mR)^{\mZ^n}:\;\forall \varphi \in \calD(\mR), \;\exists
k\in \mN: \;\forall \bxi\in \mZ^n, \;|\langle \varphi, T_\bxi\rangle |\leq
k\cdot(1+\nm \bxi \nm)^k\}.
$$
($\nm \cdot\nm$ denotes the $1$-norm in $\mR^n$.)

\medskip

\noindent An obvious necessary condition is that
$$
\textrm{ for all }\bxi \in \mZ^n, \;P\Big(\frac{d}{dt},i\bxi\Big)\not\equiv 0.
$$
However, that this condition is not sufficient is demonstrated by considering the following example.

\begin{example}
\label{example_1}
 Let $c=\displaystyle \sum_{j=1}^\infty \frac{1}{2^{j!}}
$ (a ``Liouville number'').  With $p_k:=\displaystyle \sum_{j=1}^k 2^{k!-j!}$, $q_k:=2^{k!}$,
$$
\left|c- \frac{p_k}{q_k}\right|
=
\sum_{j=k+1}^\infty \frac{1}{2^{j!}}
=\frac{1}{2^{(k+1)!}} + \frac{1}{2^{(k+2)!}}+\cdots<
2\cdot \frac{1}{2^{(k+1)!}}
\leq \left(\frac{1}{2^{k!}}\right)^k
=\frac{1}{q_k^k},\;\;
k\in \mN.
$$
Now take $P=\xi_1+c\xi_2$. 
Consider the equation $PS=T$ where $T_\xi=1$ for $\xi\ne(0,0)$ and $T_{(0,0)}=0$
and hence $S_\xi=1/(\xi_1+c\xi_2)$ for $\xi\ne(0,0)$ and $S_{(0,0)}$ is arbitrary.
But $S$ does not belong to $X$ because otherwise there would exist an $m$ such that
$$
\frac{1}{|\xi_1+c\xi_2|}\leq m(1+|\xi_1|+|\xi_2|)^m \textrm{ for all }\xi_1,\xi_2\in \mZ\setminus\{0\},
$$
and in particular,  with $\xi_1=-p_k $, and $\xi_2=q_k$, $k\in \mN$,
$$
q_k^{k-1}\leq \frac{1}{|\xi_1+c\xi_2|}\leq m (1+p_k+q_k)^m\leq m (q_k+q_k+q_k)^m = m3^m q_k^m,
$$
a contradiction. 
\hfill$\Diamond$
\end{example}

\noindent We consider a simpler situation and set
$$
Y=\{(T_\bxi)\in \calS'(\mR)^{\mZ^n}:\;\forall \varphi \in \calS(\mR), \;\exists
k\in \mN: \;\forall \bxi\in \mZ^n, \;|\langle \varphi, T_\bxi\rangle |\leq
k\cdot(1+\nm \bxi \nm)^k\}.
$$
Our main result is the following:

\begin{theorem}
 Let $P(\tau,i\bxi)\in \mC[\tau,\bxi]=\mC[\bxi][\tau]$, and for $\bxi\in \mZ^n$,
 $$
 P(\tau,i\bxi)=c_{\bxi} \cdot \prod_{j=1}^{m_\bxi} (\tau-\lambda_{j,\bxi}),
 $$
 with $m_\bxi \in \mN_0$, $c_\bxi \in \mC\setminus \{0\}$, $\lambda_{1,\bxi},\cdots, \lambda_{m_\bxi, \bxi}\in \mC$.
 $($The roots
 $\lambda_{j,\bxi}$ are arbitrarily arranged.$)$
 Let
 $$
 d_\bxi :=\left\{\begin{array}{cl}
                  1 & \textrm{if for all }j=1,\cdots,m_\bxi, \;\textrm{\em Re}(\lambda_{j,\bxi})
                  =0,\phantom{\displaystyle \inf_{j}}\\
                  \displaystyle \min_{j:\textrm{\em Re}(\lambda_{j,\bxi})\neq 0}
                  |\textrm{\em Re}(\lambda_{j,\bxi})|& \textrm{otherwise}.
                 \end{array}\right.
 $$
 If
 \begin{enumerate}
  \item $(c_\bxi^{-1}) \in \cals'(\mZ^n)$ and
  \item $(d_\bxi^{-1}) \in \cals'(\mZ^n)$,
 \end{enumerate}
then $P\left(\displaystyle\frac{d}{dt}, i\bxi\right):Y\rightarrow Y$ is surjective.
\end{theorem}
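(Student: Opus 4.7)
The plan is to reduce the problem to solving the scalar ODE $P(d/dt, i\bxi)S_\bxi = T_\bxi$ separately for each $\bxi\in\mZ^n$ by convolution with an explicit tempered fundamental solution $E_\bxi$, and then to read off from hypotheses (1) and (2) that the resulting sequence $(S_\bxi)$ still satisfies the polynomial-growth condition defining $Y$.

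For each $\bxi$ I would factor
$$
P(d/dt, i\bxi) = c_\bxi\prod_{j=1}^{m_\bxi}(d/dt - \lambda_{j,\bxi})
$$
and assemble $E_\bxi$ as an iterated convolution of the one-sided first-order fundamental solutions
$$
E_{j,\bxi}(t) := \begin{cases} H(t)\,e^{\lambda_{j,\bxi}t}, & \operatorname{Re}\lambda_{j,\bxi}\leq 0,\\ -H(-t)\,e^{\lambda_{j,\bxi}t}, & \operatorname{Re}\lambda_{j,\bxi}>0, \end{cases}
$$
each of modulus at most $1$ and with support on a half-line chosen so that no exponential blow-up occurs. Setting $E_\bxi := c_\bxi^{-1}E_{1,\bxi}*\cdots *E_{m_\bxi,\bxi}$ gives $P(d/dt, i\bxi)E_\bxi = \delta$, and $S_\bxi := E_\bxi * T_\bxi$ is the candidate solution. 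Bounds on $E_\bxi$ come from separating the integrable factors (those $j$ with $\operatorname{Re}\lambda_{j,\bxi}\neq 0$, for which $\int|E_{j,\bxi}(t)|(1+|t|)^N\,dt \leq C_N\,|\operatorname{Re}\lambda_{j,\bxi}|^{-N-1}\leq C_N\,d_\bxi^{-N-1}$) from the merely bounded factors corresponding to purely imaginary roots. Submultiplicativity of the weighted $L^1$-norms under convolution together with Young's inequality then yields an estimate of the shape
$$
|(E_\bxi * \varphi)(t)|\leq C\,\|\varphi\|_{N}\,|c_\bxi|^{-1}\,d_\bxi^{-m(N+1)}\,(1+|t|)^{N+m},\qquad \varphi\in\cals(\mR),
$$
with $m = \deg_\tau P$; the entire $\bxi$-dependence is concentrated in the factors $|c_\bxi|^{-1}$ and $d_\bxi^{-1}$.

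The main technical obstacle is to make sense of the convolution $S_\bxi = E_\bxi * T_\bxi$ rigorously in $\cals'(\mR)$: once purely imaginary roots are present $E_\bxi$ is only polynomially bounded and does not lie in the Schwartz convolution class $\calO_C'$. To handle this I would invoke the structure theorem for tempered distributions to write $T_\bxi = \partial_t^K g_\bxi$ with $g_\bxi$ continuous of polynomial growth, transfer the derivatives onto $E_\bxi$ (which is piecewise smooth and supported on half-lines on either side of $0$), and define the remaining integrations via the weighted estimates above together with the half-line support of the integrable part of $E_\bxi$. Once $S_\bxi$ is rigorously defined, the estimate above combined with the assumed $Y$-bound on $T_\bxi$ gives $|\langle\varphi, S_\bxi\rangle| \leq C_\varphi\,|c_\bxi|^{-1}\,d_\bxi^{-K}\,(1+\nm\bxi\nm)^K$ for some $K$, and hypotheses (1)--(2) then convert the factors $|c_\bxi|^{-1}$ and $d_\bxi^{-1}$ into polynomial growth in $\nm\bxi\nm$, giving $(S_\bxi)\in Y$ as required.
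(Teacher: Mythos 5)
Your basic strategy — build one-sided fundamental solutions $E_{j,\bxi}$ for each first-order factor, convolve $T_\bxi$ with the iterated product, and read off the $\bxi$-dependence from the factors $|c_\bxi|^{-1}$ and $d_\bxi^{-1}$ — is the same idea the paper uses. However, your treatment of purely imaginary roots has a genuine gap, and it is precisely the step you yourself flag as the main technical obstacle.

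When $\mathrm{Re}(\lambda_{j,\bxi})=0$ the function $E_{j,\bxi}=H(t)e^{\lambda_{j,\bxi}t}$ is merely bounded on a half-line, and the structure theorem does not rescue the convolution. Writing $T_\bxi=\partial_t^K g_\bxi$ with $g_\bxi$ continuous of polynomial growth and transferring derivatives onto $E_\bxi$ produces some $\delta$-terms at the origin plus a term that still involves $H(t)e^{\lambda t}$, a non-decaying bounded function; the convolution of that term against a polynomially growing $g_\bxi$ does not converge absolutely, and there is no ``integrable part of $E_\bxi$'' to exploit when all the roots are purely imaginary. Equivalently, by duality one is led to $\langle\varphi,E_\bxi*T_\bxi\rangle=\langle\varphi*\check E_\bxi,T_\bxi\rangle$, but $\varphi*\check E_\bxi$ does \emph{not} decay as $t\to-\infty$ (its limit there is $e^{-\lambda t}\widehat\varphi$ up to constants), so it is not in $\calS(\mR)$ and cannot be paired with an arbitrary tempered distribution. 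This is not a bookkeeping issue; without a further idea the candidate $S_\bxi$ is simply not defined.

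The device the paper uses is a cutoff: pick $\chi_\pm\in C^\infty$ with $\chi_++\chi_-=1$, $\chi_+(t)=1$ for $t\ge0$ and $\chi_+(t)=0$ for $t\le-1$, and set $R_\lambda(U)=(\chi_+U)*E_++(\chi_-U)*E_-$ when $\mathrm{Re}(\lambda)=0$. Then the test function appearing on the $U$-side is $(\varphi*\check E_\pm)\chi_\pm$, which \emph{is} Schwartz because the cutoff kills the half-line where $\varphi*\check E_\pm$ fails to decay. Each application of $R_\lambda$ is a genuine right inverse of $d/dt-\lambda$, and the degree-$m_\bxi$ operator is treated by iterating first-order factors rather than by a single iterated convolution.

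Two further points your estimate misses. First, even granting a definition of $S_\bxi$, the $\calS$-seminorms of $(\varphi*\check E_\pm)\chi_\pm$ grow polynomially in $|\lambda_{j,\bxi}|$, so the $\bxi$-dependence is \emph{not} concentrated only in $|c_\bxi|^{-1}$ and $d_\bxi^{-1}$; you also need the roots themselves to grow at most polynomially in $\nm\bxi\nm$. That is the content of Lemma~\ref{lemma_poly_growth}, which you do not invoke and which is not automatic. Second, the claimed weighted bound $\int|E_{j,\bxi}|(1+|t|)^N\,dt\le C_N|\mathrm{Re}\,\lambda_{j,\bxi}|^{-N-1}$ is false for $|\mathrm{Re}\,\lambda_{j,\bxi}|$ large (the left side behaves like $|\mathrm{Re}\,\lambda|^{-1}$); the correct statement is a bound of the form $C_N\bigl(1+|\mathrm{Re}\,\lambda_{j,\bxi}|^{-N-1}\bigr)\le C_N\bigl(1+d_\bxi^{-N-1}\bigr)$, which still suffices. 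The first two gaps (the $\chi_\pm$ device and Lemma~\ref{lemma_poly_growth}) are the ones that need repairing for the proof to go through.
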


From Example~\ref{example_1}, it follows that the first condition is not superfluous.
Here is an example demonstrating  that the second condition is not superfluous either.

\begin{example}
 Take
 $$
 P\left(\displaystyle\frac{d}{dt}, i\bxi\right):=\frac{d}{dt} +\xi_1+c \xi_2
 $$
 with the same $c$ as in Example~\ref{example_1}, and
  $
 T_\bxi:=1$, $\bxi \in \mZ^2.
 $ Then the constant $S_\bxi=1/(\xi_1+c\xi_2)$ is the only solution in $\calS'(\mR)$ of the equation
$$
P\left(\displaystyle\frac{d}{dt}, i\bxi\right)S_\bxi=T_\bxi=1,
$$ 
for $\xi\ne (0,0)$, but the sequence
  $$ 
  (S_\bxi) =\left(\displaystyle\frac{1}{\xi_1+c\xi_2}\right)
  $$ 
  does not belong to $Y$ and
hence $(1)_{\bxi \in \mZ^2}$  is not contained in range of $P.$
  \hfill$\Diamond$
\end{example}

\section{Preliminaries}

There holds that
$$
\begin{array}{ccccc}
Y&\simeq &\calL(\calS(\mR),\cals'(\mZ^n))&\simeq &  \calS'(\mR)\widehat{\otimes} \cals'(\mZ^n).\\
(T_\bxi)&\longmapsto & (\varphi\mapsto \langle \varphi, T_\bxi\rangle)
\end{array}
$$
(For the first isomorphism we use the Closed Graph Theorem,
while the second isomorphism follows from \cite[Proposition~50.4]{Tre}.)
Also, by \cite[Theorem 51.3, p.~528, and Corollary to Theorem 51.6, p.~531]{Tre},
$$
\calS'(\mR)\widehat{\otimes} \cals'(\mZ^n)\simeq \calS'(\mR)\widehat{\otimes} \calS'((\mR/\mZ)^n)
\simeq \calS'(\mR\times (\mR/\mZ)^n)=\widehat{Y},
$$
that is, $\widehat{Y}$ is the dual of a Fr\'echet space and there holds that
$$
\begin{array}{cc}
\textrm{for all }\widehat{T}\in \widehat{Y} \textrm{ there exists } k\in \mN \textrm{ such that for all }
\psi \in \calS(\mR \times (\mR/\mZ)^n),\\
|\langle \psi , \widehat{T} \rangle |
\leq k\cdot \displaystyle \sum_{\nm \balpha\nm \leq k} \|(1+|t|)^k \bpartial^\balpha\psi\|_\infty .\end{array}
$$
Hence it follows that in $Y$:
$$
\begin{array}{cc}
\textrm{for all }(T_\bxi) \in Y \textrm{ there exists } k\in \mN \textrm{ such that for all }
\varphi \in \calS(\mR) \textrm{ and all } \bxi \in \mZ^n,\\
|\langle \varphi , T_\bxi \rangle |
\leq k\cdot (1+\nm \bxi\nm )^k \cdot  \displaystyle \sum_{j=0}^k
\|(1+|t|)^k \varphi^{(j)} \|_\infty .\end{array}
$$
In particular for $\varphi \in \calS(\mR)$ and $t\in \mR$,
\begin{eqnarray*}
|(\varphi \ast \check{T_\bxi})(t)|
&=&
|\langle (\tau\mapsto\varphi(t+\tau)),T_\bxi \rangle |
\leq  k\cdot (1+\nm \bxi \nm )^k \cdot
\sum_{j=0}^k \left\|\tau\mapsto \Big((1+|t-\tau|)^k  \varphi^{(j)}(\tau)\Big) \right\|_\infty
\\
&\leq & C_{\varphi,k} \cdot (1+\nm \bxi\nm)^k \cdot (1+|t|)^k.
\end{eqnarray*}

We will also  need the following lemma.

\begin{lemma}
 \label{lemma_poly_growth}
 Consider a monic polynomial
 $$
 P=\tau^d +c_{d-1,\bxi} \tau^{d-1} +\cdots+ c_{1,\bxi}\tau +c_{0,\bxi} \in \cals'(\mZ^n)[\tau].
 $$
 For $\bxi\in \mZ^n$, we factorize
 $$
 P(\tau,i\bxi)= \prod_{j=1}^{d} (\tau-\lambda_{j,\bxi}),
 $$
 with $\lambda_{1,\bxi},\cdots, \lambda_{d,\bxi}\in \mC$.  $($The roots
 $\lambda_{j,\bxi}$ are arbitrarily arranged.$)$
  Then  $( \lambda_{j,\bxi})_\bxi\in \cals'(\mZ^n)$, $j=1,\cdots, d$.
\end{lemma}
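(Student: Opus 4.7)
The plan is to use a classical a priori bound on the moduli of roots of a monic polynomial, together with the characterization of $\cals'(\mZ^n)$ as sequences of at most polynomial growth on $\mZ^n$.

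First, I would unpack the hypothesis: since $c_{\ell,\bxi}\in \cals'(\mZ^n)$ for every $\ell=0,\ldots,d-1$, there exist constants $C_\ell>0$ and $k_\ell\in\mN$ such that
$$
|c_{\ell,\bxi}|\leq C_\ell\cdot(1+\nm\bxi\nm)^{k_\ell},\qquad \bxi\in\mZ^n.
$$
Setting $C:=\max_\ell C_\ell$ and $k:=\max_\ell k_\ell$, one obtains a uniform bound $|c_{\ell,\bxi}|\leq C\cdot(1+\nm\bxi\nm)^{k}$ for all $\ell$ and all $\bxi$.

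Next, I would invoke Cauchy's bound (or any equivalent elementary estimate) for the roots of a monic polynomial: if $\tau^d+c_{d-1}\tau^{d-1}+\cdots+c_0$ has a root $\lambda$, then
$$
|\lambda|\leq 1+\max_{\ell=0,\ldots,d-1}|c_\ell|.
$$
Applying this pointwise at each $\bxi\in\mZ^n$ gives
$$
|\lambda_{j,\bxi}|\leq 1+C\cdot(1+\nm\bxi\nm)^{k}\leq (1+C)\cdot(1+\nm\bxi\nm)^{k},
$$
for every $j=1,\ldots,d$ and every $\bxi\in\mZ^n$.

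The crucial observation is that this bound depends only on the polynomial, not on which root we are naming as the $j$-th one. Consequently, any arrangement of the roots produces a sequence $(\lambda_{j,\bxi})_{\bxi\in\mZ^n}$ of polynomial growth, hence in $\cals'(\mZ^n)$. There is no real obstacle: the potential worry would have been a measurability/selection issue of individual branches of roots, but the statement explicitly allows arbitrary arrangement, and the bound is uniform over all branches, so this worry disappears.
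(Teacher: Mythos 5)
Your proof is correct, and it is genuinely different from — and simpler than — the one in the paper. The paper argues by contradiction and induction on the degree: it writes $P = (\tau + \alpha_\bxi)\,Q_\bxi(\tau)$ with $Q_\bxi$ monic of degree $d-1$, compares coefficients to get a triangular system linking $\alpha_\bxi$, the coefficients $b_{\ell,\bxi}$ of $Q_\bxi$, and the given $c_{\ell,\bxi}$, and then shows that if $(\alpha_\bxi)$ were not of polynomial growth, all the $b_{\ell,\bxi}$ along a witnessing subsequence would tend to $0$, contradicting the last equation $\alpha_\bxi + b_{d-2,\bxi} = c_{d-1,\bxi}$; a similar argument then propagates polynomial growth to the $b_{\ell,\bxi}$, setting up the induction. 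Your approach sidesteps all of this by invoking Cauchy's root bound $|\lambda|\le 1+\max_\ell|c_\ell|$ pointwise in $\bxi$, which immediately gives a bound on every root that is uniform over the arbitrary labelling, and hence membership of each $(\lambda_{j,\bxi})_\bxi$ in $\cals'(\mZ^n)$. This is shorter, avoids any induction on degree, and makes the independence from the choice of root-ordering completely transparent; the paper's longer route does not appear to buy any extra generality here. One small stylistic point: you should state (or reference) Cauchy's bound with a one-line justification, since it is the only nontrivial ingredient, but this is entirely standard.
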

\begin{proof}
 Let $(\tau +\alpha_\bxi)(\tau^{d-1}+ b_{d-2,\bxi}\tau^{d-2}+\cdots+b_{1,\bxi}\tau +b_{0,\bxi})=P$.
 Then by comparing coefficients of the powers of $\tau$, we obtain
 \begin{eqnarray*}
  \alpha_\bxi \cdot b_{0,\bxi}&=&c_{0,\bxi},\\
  \alpha_\bxi \cdot b_{1,\bxi}+b_{0,\bxi} &=&c_{1,\bxi},\\
  &\vdots & \\
   \alpha_\bxi \cdot b_{d-2,\bxi}+b_{d-3,\bxi} &=&c_{d-2,\bxi},\\
   \alpha_\bxi + b_{d-2,\bxi} &=&c_{d-1,\bxi}.
 \end{eqnarray*}
 We first show that $(\alpha_\bxi)\in \cals'(\mZ^n)$. Suppose this is not true.
 Then there exists a sequence $(\bxi_k)_{k}$ such that
 $$
 \lim_{k\rightarrow\infty}\nm \bxi_k \nm=\infty
 $$
 and $|\alpha_{\bxi_k}|>k (1+\nm \bxi_k \nm)^k$ for all $k$. But then from the first equation in the above equation array,
 it follows that
 $$
 |b_{0,\bxi_k}|= \frac{|c_{0,\bxi_k}|}{|\alpha_{\bxi_k}|}\stackrel{k\rightarrow \infty}{\longrightarrow} 0.
 $$
 Then from the second equation in the above equation array, we also obtain that
 $$
|b_{1,\bxi_k}|= \frac{|c_{1,\bxi_k}-b_{0,\bxi_k} |}{|\alpha_{\bxi_k}|}\stackrel{k\rightarrow \infty}{\longrightarrow} 0.
 $$
 Proceeding in this manner, we get eventually that
 $$
 1\leq \frac{|c_{d-1,\bxi_k}-b_{d-2,\bxi_k} |}{|\alpha_{\bxi_k}|}\stackrel{k\rightarrow \infty}{\longrightarrow} 0,
 $$
 a contradiction.

 We will be done once we show that $(b_{d-2,\bxi}),\cdots,  (b_{1,\bxi}) , (b_{0,\bxi})$ all  belong
 to $\cals'(\mZ^n)$ by an inductive argument. Suppose that $k$ is the least index such that
  $(b_{k,\bxi})\not\in \cals'(\mZ^n)$. But then by a similar argument as above, it follows
  from
  $$
    \alpha_\bxi \cdot b_{k+1,\bxi}+b_{k,\bxi} =c_{k+1,\bxi}
    $$
    that
    $(b_{k+1,\bxi})\not\in \cals'(\mZ^n)$ (since we have
    already established that $(\alpha_\bxi)\in \cals'(\mZ^n)$). Proceeding in this manner,
    we eventually obtain that  $b_{d-2,\bxi}\not\in \cals'(\mZ^n)$,
    which clearly contradicts the last equation in the above equation array, namely that
    $\alpha_\bxi + b_{d-2,\bxi} =c_{d-1,\bxi}$.
\end{proof}

\section{Proof of the main result}

\begin{proof} (a) The pointwise multiplication map
$$
c_\bxi: \cals'(\mZ^n)\rightarrow  \cals'(\mZ^n): (\alpha_\bxi) \mapsto (c_\bxi \cdot \alpha_\bxi)
$$
is a topological vector space isomorphism, thanks to the first assumption that $(c_\bxi^{-1}) \in \cals'(\mZ^n)$. So it
is enough to prove the surjectivity of
$$
Q:=\frac{1}{c_\bxi} P\left(\displaystyle\frac{d}{dt}, i\bxi\right):Y\rightarrow Y,
$$
that is, of $Q$ given by
$$
Q(T_\bxi)=\prod_{j=1}^{m_\bxi}\left(\displaystyle\frac{d}{dt}-\lambda_{j,\bxi}\right) T_\bxi.
$$
By Lemma~\ref{lemma_poly_growth}, since the coefficients of $\tau\mapsto P(\tau,i\bxi)$ are polynomials in $\bxi$ and because
 $(c_\bxi^{-1}) \in \cals'(\mZ^n)$, it follows that $(\lambda_{j,\bxi})\in \cals'(\mZ^n)$, that is, there
 exists a $k\in \mN$ such that for all $\bxi \in \mZ^n$, and all $j=1,\cdots, m_\bxi$, $|\lambda_{j,\bxi}|\leq k(1+\nm\bxi\nm)^k$.
 Thus it is enough to show that for every $(T_\bxi)\in Y$, there exists a $(S_\bxi)\in Y$ such that
 for all $\bxi \in \mZ^n$,
 $$
 \left\{\begin{array}{cl}
  S_\bxi=T_\bxi & \textrm{ if }m_\bxi=0,\phantom{\displaystyle\frac{a}{b}}\\
  \left(\displaystyle\frac{d}{dt}-\lambda_{1,\bxi}\right) S_\bxi  =T_\bxi & \textrm{ if }m_\bxi >0.
        \end{array}
\right.
$$
 Then this process can be inductively continued for the other $j$'s.

 \medskip

  \noindent (b) For $\lambda \in \mC$, let
  \begin{eqnarray*}
   E_+&=& Y(t)e^{\lambda t},\\
   E_-&=& -Y(-t)e^{\lambda t},
  \end{eqnarray*}
  where $Y(\cdot)$ denotes the Heaviside step function.
  Let $\chi_+\in C^\infty(\mR)$ be such that
  $$
  \chi_+(t)=\left\{\begin{array}{cl}
                    1 &\textrm{if } t\geq 0,\\
                    0 & \textrm{if }t\leq -1,
                   \end{array}\right.
  $$
  and define $\chi_- \in C^\infty(\mR)$ by $\chi_+ +\chi_- =1$ on $\mR$. For $U\in \calS'(\mR)$,
  define $R_\lambda(U)$ by
  $$
  R_\lambda(U)=\left\{\begin{array}{cl}
                       U\ast E_+&  \textrm{if Re}(\lambda)<0,\\
                       U\ast E_-&  \textrm{if Re}(\lambda)>0,\\
                       (\chi_+ \cdot U)\ast  E_+ + (\chi_- \cdot U)\ast  E_-&  \textrm{if Re}(\lambda)=0.
                      \end{array}\right.
$$
(Note that for $\pm \textrm{Re}(\lambda)<0$, since $E_{\pm}\in \calO_C'(\mR)$,
the space of distributions rapidly decreasing at infinity, it follows that $R_\lambda(U)$ is well-defined.) Then there holds that
$$
\left(\displaystyle\frac{d}{dt}-\lambda \right)R_\lambda(U)=U.
$$
For $U\in \calS'(\mR)$, $\textrm{Re}(\lambda)<0$ and $\varphi\in \calS(\mR)$,
$$
\langle \varphi, R_\lambda (U)\rangle
=
\langle \varphi, U\ast E_+ \rangle
=
\langle \varphi \ast \check{U} , E_+\rangle
=
\int_0^\infty (\varphi \ast \check{U})(t) e^{\lambda t} dt.
$$
Thus $\varphi \ast \check{U}\in \calO_{M}$ (smooth functions slowly increasing at infinity), and so there exists a
$k\in \mN$ such that for all $t\in \mR$,
$$
|(\varphi \ast \check{U})(t)|\leq k (1+|t|)^k.
$$
Consequently, when $\textrm{Re}(\lambda)<0$, we have
$$
|\langle \varphi, R_\lambda (U)\rangle |
\leq
k \int_0^\infty (1+t)^k |e^{\lambda t}|dt
\leq
C_{k} (1+(-\textrm{Re}(\lambda))^{-k-1}).
$$
Now the second assumption that $(d_\bxi^{-1})\in \cals'(\mZ^n)$ will allow us to
conclude that for $(T_\bxi) \in Y$,
$$
S_\bxi:= \left\{\begin{array}{cl}
  T_\bxi & \textrm{ if }m_\bxi=0,\\
  R_{\lambda_{1,\bxi}} (T_\bxi)  & \textrm{ if }m_\bxi >0
        \end{array}
\right.
$$
belongs to $Y$ as well. The details are as follows. For $(T_\bxi)\in Y$, $\varphi \in \calS(\mR)$, there exist
$k\in \mN$, $C_{\varphi,k}\in \mR$ such that for all $t\in \mR$,
 $|(\varphi \ast \check{T_\bxi}) (t)|\leq C_{\varphi,k} \cdot  (1+\nm \bxi\nm )^k (1+|t|)^k$. So for
 $\bxi$ such that $\textrm{Re}(\lambda_{1,\bxi})\neq 0$, we have
 \begin{eqnarray}
  \nonumber
|\langle \varphi ,R_{\lambda_{1,\bxi}}(T_\bxi)\rangle |
&\leq &
C_{\varphi, k} \cdot  (1+\nm \bxi\nm)^k
\cdot \int_0^\infty (1+t)^k e^{-|\textrm{Re}(\lambda_{1,\bxi})| t } dt
\\\nonumber
&\leq &
C_{\varphi, k} \cdot  (1+\nm \bxi\nm)^k
\cdot  \widetilde{C}_k \cdot (1+ |\textrm{Re}(\lambda_{1,\bxi})|^{-k-1} )\phantom{\int_a^b}
\\\label{estimate_1}
&\leq &
C_{\varphi, k} \cdot  (1+\nm \bxi\nm)^k
\cdot  \widetilde{C}_k \cdot (1+ d_\bxi^{-k-1} ).\phantom{\int_a^b}
 \end{eqnarray}
On the other hand, when $\textrm{Re}(\lambda_{1,\bxi})=0$ and $\varphi \in \calS(\mR)$,
we have  $(\varphi \ast \check{E}_+)\cdot \chi_{+}\in \calS(\mR)$
because for example using the fact that for each $k$ there exists a constant $C$ such that
for all $t,\tau\geq 0$, $|\varphi(t+\tau)|\leq C\cdot (1+t+\tau )^{-k-1}$, we obtain that
 for all $k\in \mN$, there exists $ C_k>0$ such that for all $t\geq 0, $
$$
 |(\varphi \ast \check{E}_+)(t)|
 =
 \left|\int_{-\infty}^\infty \varphi (t+\tau) E_+(\tau)d\tau\right|
=\left|\int_0^\infty \varphi (t+\tau) e^{\lambda_{1,\bxi} \tau} d\tau\right|\leq C_k \cdot (1+t)^{-k}.
$$
Similarly, $(\varphi \ast \check{E}_-)\cdot \chi_{-}\in \calS(\mR)$ as well.
Furthermore,  the $\calS$-seminorms of $\psi=(\varphi \ast \check{E}_\pm)\cdot \chi_{\pm}$ grow at most polynomially  in $|\lambda_{1,\bxi}|$. This gives
\begin{eqnarray}
 \nonumber
 |\langle \varphi, R_{\lambda_{1,\bxi}} (T_\bxi)\rangle |
 &\leq &
 |\langle (\varphi \ast \check{E}_+)\cdot \chi_{+},T_\bxi \rangle |
 +
 |\langle (\varphi \ast \check{E}_-)\cdot \chi_{-},T_\bxi \rangle |
 \\
 \label{estimate_2}
 &\leq & k\cdot (1+\nm \bxi\nm )^k \cdot (1+|\lambda_{1,\bxi}|)^k ,
\end{eqnarray}
for $\textrm{Re}(\lambda_{1,\bxi})=0$ and a suitable $k\in \mN$ which depends on $\varphi$ and $(T_\bxi)$.
 Using $(d_\bxi^{-1})\in \cals'(\mZ^n)$ and the estimates \eqref{estimate_1} and \eqref{estimate_2}, this completes the proof.
\end{proof}

Let us finally go back to the original problem.  We consider temperate distributions which are periodic in the
space variables, i.e., distributions in
$$
 \widehat Y=\{T\in \calS'(\mR^{n+1}_{t,x}): \forall a\in A\mZ^n: T(t,x+a)=T(t,x)\}
$$
where $A\in\mR^{n\times n}$ is a non-singular matrix. Upon using partial Fourier transform with respect to
$x,$ we obtain the isomorphism
$$
F:\widehat Y\overset\sim\longrightarrow Y:T\longmapsto (T_\bxi)_{\bxi\in\mZ^n}, \text{ where }(\calF_xT)(\eta)=
  \sum_{\bxi\in\mZ^n} T_\bxi(t)\delta(\eta-B\bxi)
$$
with $B=2\pi (A^{-1})^\top$ (where $\cdot^\top$ is used to denote the transpose)  and
$$
\langle\phi,\calF_x T\rangle=\langle\int_{\mR^n}\phi(t,\eta) e^{-i x\eta}\,d\eta, T\rangle\text{ for }\phi\in\calS(\mR^{n+1}).
$$
\par
Since $F(P(\partial)T)=\bigl(P(\frac d{dt}, iB\bxi) T_\bxi\bigr)_{\bxi\in\mZ^n}$ holds if
$FT=(T_\bxi)_{\bxi\in\mZ^n},$ we obtain the following corollary to Theorem 1.2.
\begin{corollary}
Let $P(\partial)\in\mC[\partial_t,\partial_1,\dots,\partial_n],$ $A\in\text{\rm Gl}_n(\mR),$ $B=2\pi (A^{-1})^\top$ 
and $\widehat Y$ as above.
We assume, furthermore, that
$$
P(\tau,iB\bxi)=c_\bxi\cdot \prod_{j=1}^{m_\bxi} (\tau-\lambda_{j,\bxi}),
 $$
with $m_\bxi \in \mN_0$, $c_\bxi \in \mC\setminus \{0\}$, $\lambda_{1,\bxi},\cdots, \lambda_{m_\bxi, \bxi}\in \mC$,
and that the conditions {\rm (1)} and {\rm (2)} in Theorem 2.1 hold.
  Then $P(\partial):\widehat Y\longrightarrow\widehat Y$ is surjective.
\end{corollary}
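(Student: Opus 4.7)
The plan is to deduce the corollary directly from Theorem~1.2 by transporting the problem along the partial Fourier isomorphism $F: \widehat{Y} \to Y$ described just before the corollary. The substance has essentially already been set up in the preceding paragraphs: $F$ is a topological isomorphism of locally convex spaces, and the intertwining identity
$$
F\bigl(P(\partial) T\bigr) = \bigl(P(\tfrac{d}{dt}, iB\bxi) T_\bxi\bigr)_{\bxi \in \mZ^n}, \qquad FT=(T_\bxi),
$$
holds because under $\calF_x$ each spatial derivative $\partial_{x_j}$ becomes multiplication by $i\eta_j$, and the support of $\calF_x T$ is contained in the dual lattice $B\mZ^n$, so $\eta$ is effectively replaced by $B\bxi$ on the $\bxi$-th coefficient. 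These two facts reduce the surjectivity of $P(\partial)$ on $\widehat{Y}$ to the surjectivity of the multiplication operator
$$
M:Y\longrightarrow Y,\qquad M(T_\bxi) = \bigl(P(\tfrac{d}{dt},iB\bxi) T_\bxi\bigr)_{\bxi\in\mZ^n}.
$$

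To apply Theorem~1.2 I would verify that $\widetilde{P}(\tau,\bxi):=P(\tau,iB\bxi)$ fits its hypotheses. Since $B \in \mR^{n\times n}$ is a constant non-singular matrix, $\widetilde{P}$ is indeed an element of $\mC[\tau,\bxi]$, and the factorization data $c_\bxi$, $m_\bxi$, $\lambda_{j,\bxi}$, $d_\bxi$ attached to $\widetilde{P}$ coincide with those assumed in the statement. Conditions (1) and (2) are precisely what the corollary postulates, so Theorem~1.2 applies and yields the surjectivity of $M$ on $Y$.

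The proof is then concluded by chasing the diagram: given $\widehat{T}\in\widehat{Y}$, set $(T_\bxi):=F\widehat{T}\in Y$, pick $(S_\bxi)\in Y$ with $M(S_\bxi)=(T_\bxi)$ by Theorem~1.2, and define $\widehat{S}:=F^{-1}(S_\bxi)\in\widehat{Y}$. The intertwining identity gives $F(P(\partial)\widehat{S}) = M(F\widehat{S}) = (T_\bxi) = F\widehat{T}$, and since $F$ is an isomorphism we conclude $P(\partial)\widehat{S}=\widehat{T}$.

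There is no real obstacle here; the entire content is a diagram chase once Theorem~1.2 is in hand. The only point that merits a sentence of justification is that $\widetilde{P}(\tau,\bxi)=P(\tau,iB\bxi)$ really is polynomial in $\bxi$ so that Theorem~1.2 (stated for $P\in\mC[\tau,\bxi]$) applies verbatim, and that $F$ intertwines $P(\partial)$ with $M$ as a continuous linear map, both of which are immediate consequences of the Fourier construction recalled in the paragraphs preceding the corollary.
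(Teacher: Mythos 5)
Your proposal is correct and follows exactly the paper's approach: conjugating $P(\partial)$ by the partial Fourier isomorphism $F:\widehat Y\to Y$, using the intertwining identity $F(P(\partial)T)=\bigl(P(\tfrac d{dt},iB\bxi)T_\bxi\bigr)_\bxi$, and invoking the main theorem to conclude surjectivity via a diagram chase. The paper leaves the chase implicit (it presents the corollary as an immediate consequence once $F$ and the intertwining relation are in place), whereas you spell it out, but the argument is the same.
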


\end{document}